\newtheorem{theorem}{Theorem}
\theoremstyle{plain}
\newtheorem{corollary}{Corollary}
\newtheorem{definition}{Definition}
\newtheorem{lemma}{Lemma}
\numberwithin{equation}{section}
\begin{document}
\title[Some integral inequalities]{Some integral inequalities for $\alpha $-$%
,$ $m$-$,$ $\left( \alpha ,m\right) $-logarithmically convex functions}
\author{MEVL\"{U}T TUN\c{C}$^{\square }$}
\address{$^{\square }$Department of Mathematics, Faculty of Science and
Arts, Kilis 7 Aral\i k University, Kilis, 79000, Turkey.}
\email{$^{\square }$mevluttunc@kilis.edu.tr}
\author{EBRU Y\"{U}KSEL$^{\triangledown }$}
\address{$^{\triangledown }$The Institute for Graduate Studies in Sciences
and Engineering, Kilis 7 Aral\i k University, Kilis, 79000, Turkey.}
\email{$^{\triangledown }$yuksel.ebru90@hotmail.com}
\thanks{$^{\square }$Corresponding Author}
\date{January 10, 2013}
\subjclass[2000]{26A15, 26A51, 26D10}
\keywords{$\alpha $-, $m$-$,$ $\left( \alpha ,m\right) $-logarithmically
convex, Hadamard's inequality, H\"{o}lder's inequality, power mean
inequality, Cauchy's inequality.}
\thanks{This paper is in final form and no version of it will be submitted
for publication elsewhere.}

\begin{abstract}
In this paper, we establish some new Hadamard type inequalities using
elementary well known inequalities for functions whose inequalities absolute
values are $\alpha $-, $m$-$,$ $\left( \alpha ,m\right) $-logarithmically
convex.
\end{abstract}

\maketitle

\section{\protect INTRODUCTION}

Let \ $f:I\subseteq
\mathbb{R}
\rightarrow
\mathbb{R}
$ be a convex mapping defined on the interval $I$\ of real numbers and $%
a,b\in I,$ with $a<b.$ The following double inequalities:%
\begin{equation*}
f\left( \frac{a+b}{2}\right) \leq \int_{a}^{b}f\left( x\right) dx\leq \frac{%
f\left( a\right) +f\left( b\right) }{2}
\end{equation*}%
hold. This double inequality is known in the literature as the
Hermite-Hadamard inequality for convex functions (see \cite{bai}-\cite{tnc}).

In this section, we will present definitions and some results used in this
paper.

\begin{definition}
Let $I$ be an interval in $%
\mathbb{R}
.$ Then $f:I\rightarrow
\mathbb{R}
,$ $\emptyset \neq I\subseteq
\mathbb{R}
$\ is said to be convex if
\begin{equation}
f\left( tx+\left( 1-t\right) y\right) \leq tf\left( x\right) +\left(
1-t\right) f\left( y\right) .  \label{7}
\end{equation}%
for all $x,y\in I$ and $t\in \left[ 0,1\right] .$
\end{definition}

In \cite{bai}, the concepts of $\alpha $-, $m$- and $\left( \alpha ,m\right)
$-logarithmically convex functions were introduced as follows.

\begin{definition}
\cite{bai}\label{d} A function $f:[0,b]\rightarrow (0,\infty )$ is said to
be $m$-logarithmically convex if the inequality%
\begin{equation}
f\left( tx+m\left( 1-t\right) y\right) \leq \left[ f\left( x\right) \right]
^{t}\left[ f\left( y\right) \right] ^{m\left( 1-t\right) }  \label{d1}
\end{equation}%
holds for all $x,y\in \lbrack 0,b]$, $m\in (0,1]$, and $t\in \lbrack 0,1]$.
\end{definition}

Obviously, if putting $m=1$ in Definition \ref{d}, then $f$ is just the
ordinary logarithmically convex on $\left[ 0,b\right] $.

\begin{definition}
\cite{tnc}\label{ddd} A function $f:[0,b]\rightarrow (0,\infty )$ is said to
be $\alpha $-logarithmically convex if%
\begin{equation}
f\left( tx+\left( 1-t\right) y\right) \leq \left[ f\left( x\right) \right]
^{t^{\alpha }}\left[ f\left( y\right) \right] ^{\left( 1-t^{\alpha }\right) }
\end{equation}%
holds for all $x,y\in \lbrack 0,b]$, $\alpha \in \left( 0,1\right] $ and $%
t\in \lbrack 0,1]$.
\end{definition}

Clearly, when taking $\alpha =1$ in Definition \ref{ddd}, then $f$ becomes
the ordinary logarithmically convex on $\left[ 0,b\right] $.

\begin{definition}
\cite{bai}\label{dd} A function $f:[0,b]\rightarrow (0,\infty )$ is said to
be $\left( \alpha ,m\right) $-logarithmically convex if%
\begin{equation}
f\left( tx+m\left( 1-t\right) y\right) \leq \left[ f\left( x\right) \right]
^{t^{\alpha }}\left[ f\left( y\right) \right] ^{m\left( 1-t^{\alpha }\right)
}  \label{d2}
\end{equation}%
holds for all $x,y\in \lbrack 0,b]$, $\left( \alpha ,m\right) \in \left( 0,1%
\right] \times \left( 0,1\right] ,$ and $t\in \lbrack 0,1]$.
\end{definition}

Clearly, when taking $\alpha =1$ in Definition \ref{dd}, then $f$ becomes
the standard $m$-logarithmically convex function on $\left[ 0,b\right] $,
and, when taking $m=1$ in Definition \ref{dd}, then $f$ becomes the $\alpha $%
-logarithmically convex function on $\left[ 0,b\right] .$

\section{NEW HADAMARD-TYPE INEQUALITIES}

\textbf{\ \ \ \ \ \ \ \ \ \ \ \ \ \ \ \ \ \ \ \ \ \ \ \ \ \ \ \ \ \ \ \ \ \
\ \ \ \ \ \ \ \ \ \ \ \ \ \ \ \ \ \ \ \ }

In order to prove our main theorems, we need the following lemma \cite{lmmm}.

\begin{lemma}
\label{l1}\cite{lmmm} Let $f:\ I\subset
\mathbb{R}
\rightarrow
\mathbb{R}
$ be a differentiable mapping on $I^{\circ }$\textit{, }$a,b\in $ $I^{\circ
} $ with $a$ $<$ $b$. If $f^{\prime }\in $ $L\left[ a,b\right] ,$ then the
following equality holds:
\begin{eqnarray}
&&\frac{f\left( a\right) +f\left( b\right) }{2}-\frac{1}{b-a}%
\int_{a}^{b}f\left( x\right) dx  \label{a} \\
&=&\frac{b-a}{2}\int_{0}^{1}\int_{0}^{1}\left[ f^{\prime }\left( ta+\left(
1-t\right) b\right) -f^{\prime }\left( sa+\left( 1-s\right) b\right) \right]
\left( s-t\right) dtds.  \notag
\end{eqnarray}
\end{lemma}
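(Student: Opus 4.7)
The plan is to reduce the double integral on the right-hand side to a single integral by exploiting the antisymmetry of the factor $(s-t)$, and then recognize the resulting expression as a standard trapezoidal-remainder identity proved by integration by parts.

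First I would split the right-hand side into two pieces:
\begin{equation*}
\int_{0}^{1}\!\!\int_{0}^{1}f^{\prime}(ta+(1-t)b)(s-t)\,dt\,ds \;-\; \int_{0}^{1}\!\!\int_{0}^{1}f^{\prime}(sa+(1-s)b)(s-t)\,dt\,ds.
\end{equation*}
In the first piece, the integrand is a product of a function of $t$ with $(s-t)$, so the $s$-integral contributes $\int_{0}^{1}(s-t)\,ds=\tfrac{1}{2}-t$; in the second, the $t$-integral contributes $\int_{0}^{1}(s-t)\,dt=s-\tfrac{1}{2}$. After renaming the dummy variable in the second single integral back to $t$, both pieces become multiples of $\int_{0}^{1}f^{\prime}(ta+(1-t)b)(\tfrac{1}{2}-t)\,dt$, and the two combine to
\begin{equation*}
\int_{0}^{1}\!\!\int_{0}^{1}\bigl[f^{\prime}(ta+(1-t)b)-f^{\prime}(sa+(1-s)b)\bigr](s-t)\,dt\,ds = \int_{0}^{1}f^{\prime}(ta+(1-t)b)(1-2t)\,dt.
\end{equation*}

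Next I would evaluate this single integral by integration by parts, taking $u=1-2t$ and $dv=f^{\prime}(ta+(1-t)b)\,dt$, so that $v=-\frac{1}{b-a}f(ta+(1-t)b)$. The boundary term evaluates to $\frac{f(a)+f(b)}{b-a}$, and the remaining integral is $-\frac{2}{b-a}\int_{0}^{1}f(ta+(1-t)b)\,dt$. A linear change of variables $x=ta+(1-t)b$ converts the latter into $-\frac{2}{(b-a)^{2}}\int_{a}^{b}f(x)\,dx$.

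Multiplying the result by $\frac{b-a}{2}$ yields exactly $\frac{f(a)+f(b)}{2}-\frac{1}{b-a}\int_{a}^{b}f(x)\,dx$, establishing \eqref{a}. There is no real obstacle here: the only point requiring minor care is bookkeeping the signs in the symmetry-reduction step, where one must verify that the two single integrals add rather than cancel once the dummy variables are renamed. The hypothesis $f^{\prime}\in L[a,b]$ is just what is needed to justify the integration by parts and the application of Fubini's theorem used implicitly in the reduction.
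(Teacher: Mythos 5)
Your argument is correct: the symmetry reduction to $\int_{0}^{1}f^{\prime}(ta+(1-t)b)(1-2t)\,dt$ and the subsequent integration by parts both check out, and the signs in the combination $I_{1}-I_{2}=2I_{1}$ are handled properly. This is essentially the route the paper itself indicates, since it offers no written-out proof but remarks that the identity follows by integrating by parts on the right-hand side, deferring the details to the cited reference.
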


A simple proof of this equality can be also done integrating by parts in the
right hand side (see \cite{lmmm}).

The next theorems gives a new result of the upper Hermite-Hadamard
inequality for $\alpha $-, $m$-, $\left( \alpha ,m\right) $-logarithmically
convex functions.

\begin{theorem}
\label{t1}Let$\ I\supset \left[ 0,\infty \right) $ be an open interval and
let $f:\ I\rightarrow \left( 0,\infty \right) $ be a differentiable function
on $I$ such that $f^{\prime }\in L\left( a,b\right) $ for $0\leq a<b<\infty
. $ If $\left\vert f^{\prime }\left( x\right) \right\vert $ is $\left(
\alpha ,m\right) $-logarithmically convex on $\left[ 0,\frac{b}{m}\right] $
for $\left( \alpha ,m\right) \in \left( 0,1\right] ^{2},$ then
\begin{eqnarray}
&&\left\vert \frac{f\left( a\right) +f\left( b\right) }{2}-\frac{1}{b-a}%
\int_{a}^{b}f\left( x\right) dx\right\vert \\
&\leq &\left\{
\begin{array}{cc}
\frac{\left( b-a\right) }{3}\left\vert f^{\prime }\left( \frac{b}{m}\right)
\right\vert ^{m},\text{ \ \ \ \ \ \ \ \ \ \ \ \ \ \ \ \ \ \ \ \ \ \ \ \ \ \ }
& \eta =1 \\
\frac{\left( b-a\right) }{2}\left\vert f^{\prime }\left( \frac{b}{m}\right)
\right\vert ^{m}\frac{-\alpha ^{2}\ln ^{2}\eta -2\alpha \ln \eta +2\eta
^{\alpha }-2}{\alpha ^{3}\ln ^{3}\eta }, & \eta <1%
\end{array}%
\right.  \notag
\end{eqnarray}%
where $\eta =\left\vert f^{\prime }\left( a\right) \right\vert /\left\vert
f^{\prime }\left( \frac{b}{m}\right) \right\vert ^{m}.$
\end{theorem}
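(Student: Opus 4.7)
My plan is to apply Lemma~\ref{l1}, pass to absolute values, and then exploit the $(\alpha,m)$-logarithmic convexity hypothesis to collapse the resulting double integral into a one-variable integral that admits a closed form. Starting from Lemma~\ref{l1} and using the triangle inequality $|f'(ta+(1-t)b) - f'(sa+(1-s)b)| \leq |f'(ta+(1-t)b)| + |f'(sa+(1-s)b)|$ underneath the double integral, I would rewrite $ta+(1-t)b = ta + m(1-t)(b/m)$ so that Definition~\ref{dd} applies with $x=a$, $y=b/m$, yielding
\[
|f'(ta+(1-t)b)| \leq |f'(a)|^{t^\alpha}|f'(b/m)|^{m(1-t^\alpha)} = |f'(b/m)|^m \cdot \eta^{t^\alpha},
\]
and an analogous bound in $s$.

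Next, the symmetry of $|s-t|$ under $s \leftrightarrow t$ merges the two halves, and the inner integration $\int_0^1 |s-t|\,ds = (t^2+(1-t)^2)/2$ reduces the estimate to
\[
\frac{b-a}{2}|f'(b/m)|^m \int_0^1 \eta^{t^\alpha}\bigl(t^2+(1-t)^2\bigr)\,dt.
\]
For $\eta = 1$ the integrand is purely polynomial and $\int_0^1(t^2+(1-t)^2)\,dt = 2/3$ delivers the first branch $(b-a)|f'(b/m)|^m/3$ at once. For $\eta < 1$ I would invoke the elementary comparison $t^\alpha \geq \alpha t$ on $[0,1]$ (valid for $\alpha \in (0,1]$ by monotonicity of $t^\alpha - \alpha t$); combined with $\ln \eta < 0$ this gives $\eta^{t^\alpha} \leq (\eta^\alpha)^t$. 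Setting $\zeta := \eta^\alpha$, the remaining integral takes the shape $\int_0^1 \zeta^t Q(t)\,dt$ for a quadratic $Q$, which can be evaluated by integrating by parts twice using $\int \zeta^t\,dt = \zeta^t/\ln\zeta$; substituting $\ln\zeta = \alpha\ln\eta$ at the end returns a closed form in $\eta^\alpha$, $\ln\eta$, and powers of $\alpha$.

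The main obstacle is the bookkeeping in this final step: one must shepherd the by-parts output so that the quadratic factor assembles into exactly the numerator $-\alpha^2\ln^2\eta - 2\alpha\ln\eta + 2\eta^\alpha - 2$ with denominator $\alpha^3\ln^3\eta$. A useful consistency check is the limit $\eta \to 1$, where a Taylor expansion of the rational expression in $\ln\eta$ should reproduce the value $1/3$ and thereby glue the two cases together at the boundary.
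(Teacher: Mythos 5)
Your strategy coincides with the paper's: Lemma \ref{l1}, the triangle inequality, $(\alpha,m)$-logarithmic convexity applied after writing $ta+(1-t)b=ta+m(1-t)\frac{b}{m}$, and the reduction $\eta^{t^{\alpha}}\leq \eta^{\alpha t}$ for $\eta\leq 1$ (the paper's inequality (\ref{1})). Your only real variation is to merge the two double integrals by the $s\leftrightarrow t$ symmetry and integrate out $s$ first, which is a clean simplification; the $\eta=1$ branch is complete and correct.

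The difficulty is precisely the step you defer as ``bookkeeping'': it cannot be shepherded into the stated formula, because the stated formula is not what your integral equals. Carrying out your own plan with $\zeta=\eta^{\alpha}$ and $L=\ln\zeta=\alpha\ln\eta$ gives
\begin{equation*}
\int_{0}^{1}\zeta^{t}\left( t^{2}+\left( 1-t\right)^{2}\right) dt=\frac{\zeta -1}{L}-\frac{2\left( \zeta +1\right) }{L^{2}}+\frac{4\left( \zeta -1\right) }{L^{3}}=\frac{\alpha ^{2}\ln ^{2}\eta \left( \eta ^{\alpha }-1\right) -2\alpha \ln \eta \left( \eta ^{\alpha }+1\right) +4\left( \eta ^{\alpha }-1\right) }{\alpha ^{3}\ln ^{3}\eta },
\end{equation*}
which tends to $2/3$ as $\eta \rightarrow 1^{-}$ (as it must, since the integrand tends to $t^{2}+(1-t)^{2}$), so your bound glues continuously onto the first branch $\frac{b-a}{3}\left\vert f^{\prime}(\frac{b}{m})\right\vert^{m}$. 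By contrast, the theorem's factor $\frac{-\alpha ^{2}\ln ^{2}\eta -2\alpha \ln \eta +2\eta ^{\alpha }-2}{\alpha ^{3}\ln ^{3}\eta }$ tends to $1/3$, so the two branches of the printed statement disagree by a factor of $2$ in the limit, and your (correct) computation cannot terminate at that expression. Your own consistency check is mis-calibrated for the same reason: with the prefactor $\frac{b-a}{2}$, the rational factor must tend to $2/3$, not $1/3$, to match the $\eta=1$ case. The mismatch originates in the paper's proof, which evaluates $\int_{0}^{1}\int_{0}^{1}\left\vert s-t\right\vert \eta ^{\alpha t}dtds$ and $\int_{0}^{1}\int_{0}^{1}\left\vert s-t\right\vert \eta ^{\alpha s}dtds$ separately and obtains two unequal expressions even though they are equal by symmetry. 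If you finish the integration by parts you will therefore prove a corrected version of the theorem, not the theorem as printed; to submit this as a proof of the statement above you would need to either exhibit a different argument reaching the smaller constant or note that the stated bound should be replaced by the expression displayed here.
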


\begin{proof}
By Lemma \ref{l1} and since $\left\vert f^{\prime }\right\vert $ is an $%
\left( \alpha ,m\right) $-logarithmically convex on $\left[ 0,\frac{b}{m}%
\right] $, then we have%
\begin{eqnarray*}
&&\left\vert \frac{f\left( a\right) +f\left( b\right) }{2}-\frac{1}{b-a}%
\int_{a}^{b}f\left( x\right) dx\right\vert \\
&\leq &\frac{b-a}{2}\int_{0}^{1}\int_{0}^{1}\left\vert \left( f^{\prime
}\left( ta+\left( 1-t\right) b\right) \right) -\left( f^{\prime }\left(
sa+\left( 1-s\right) b\right) \right) \right\vert \left\vert s-t\right\vert
dtds \\
&\leq &\frac{b-a}{2}\left[ \int_{0}^{1}\int_{0}^{1}\left\vert s-t\right\vert
\left\vert f^{\prime }\left( a\right) \right\vert ^{t^{\alpha }}\left\vert
f^{\prime }\left( \frac{b}{m}\right) \right\vert ^{m\left( 1-t^{\alpha
}\right) }dtds\right] \\
&&+\frac{b-a}{2}\left[ \int_{0}^{1}\int_{0}^{1}\left\vert s-t\right\vert
\left\vert f^{\prime }\left( a\right) \right\vert ^{s^{\alpha }}\left\vert
f^{\prime }\left( \frac{b}{m}\right) \right\vert ^{m\left( 1-s^{\alpha
}\right) }dtds\right]
\end{eqnarray*}%
If $\ 0<k\leq 1,$ $0<m,n\leq 1$%
\begin{equation}
k^{m^{n}}\leq k^{mn}.  \label{1}
\end{equation}%
When $\eta =1,$ by (\ref{1}), we get%
\begin{eqnarray*}
&&\left\vert \frac{f\left( a\right) +f\left( b\right) }{2}-\frac{1}{b-a}%
\int_{a}^{b}f\left( x\right) dx\right\vert \\
&\leq &\frac{b-a}{2}\left\vert f^{\prime }\left( \frac{b}{m}\right)
\right\vert ^{m}\left[ \int_{0}^{1}\int_{0}^{1}\left\vert s-t\right\vert
dtds+\int_{0}^{1}\int_{0}^{1}\left\vert s-t\right\vert dtds\right] \\
&=&\frac{b-a}{3}\left\vert f^{\prime }\left( \frac{b}{m}\right) \right\vert
^{m}
\end{eqnarray*}%
When $\eta <1,$ by (\ref{1}), we get
\begin{eqnarray*}
&&\left\vert \frac{f\left( a\right) +f\left( b\right) }{2}-\frac{1}{b-a}%
\int_{a}^{b}f\left( x\right) dx\right\vert \\
&\leq &\frac{b-a}{2}\left\vert f^{\prime }\left( \frac{b}{m}\right)
\right\vert ^{m}\left[ \int_{0}^{1}\int_{0}^{1}\left\vert s-t\right\vert
\eta ^{\alpha t}dtds+\int_{0}^{1}\int_{0}^{1}\left\vert s-t\right\vert \eta
^{\alpha s}dtds\right] \\
&=&\frac{b-a}{2}\left\vert f^{\prime }\left( \frac{b}{m}\right) \right\vert
^{m}\left[ \frac{-\alpha ^{2}\ln ^{2}\eta -2\alpha \ln \eta +4\eta ^{\alpha
}+\alpha ^{2}\eta ^{\alpha }\ln ^{2}\eta -2\alpha \eta ^{\alpha }\ln \eta -4%
}{2\alpha ^{3}\ln ^{3}\eta }\right. \\
&&+\left. \frac{-\alpha \ln \eta +2\eta ^{\alpha }-\alpha \eta ^{\alpha }\ln
\eta -2}{2\alpha ^{2}\ln ^{2}\eta }\right]
\end{eqnarray*}%
which completes the proof.
\end{proof}

\begin{corollary}
Let$\ I\supset \left[ 0,\infty \right) $ be an open interval and let $f:\
I\rightarrow \left( 0,\infty \right) $ be a differentiable function on $I$
such that $f^{\prime }\in L\left( a,b\right) $ for $0\leq a<b<\infty .$ If $%
\left\vert f^{\prime }\left( x\right) \right\vert $ is $m$-logarithmically
convex on $\left[ 0,\frac{b}{m}\right] $ for $m\in \left( 0,1\right] $ , then%
\begin{equation*}
\left\vert \frac{f\left( a\right) +f\left( b\right) }{2}-\frac{1}{b-a}%
\int_{a}^{b}f\left( x\right) dx\right\vert \leq \left\{
\begin{array}{cc}
\frac{\left( b-a\right) }{3}\left\vert f^{\prime }\left( \frac{b}{m}\right)
\right\vert ^{m},\text{ \ \ \ \ \ \ \ \ \ \ \ \ \ \ \ \ \ \ \ \ \ } & \eta =1
\\
\frac{\left( b-a\right) }{2}\left\vert f^{\prime }\left( \frac{b}{m}\right)
\right\vert ^{m}\frac{-\ln ^{2}\eta -2\ln \eta +2\eta -2}{\ln ^{3}\eta }, &
\eta <1%
\end{array}%
\right.
\end{equation*}%
where $\eta $ is same as Theorem \ref{t1}.
\end{corollary}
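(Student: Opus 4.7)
The plan is to obtain this corollary as a direct specialization of Theorem \ref{t1}. The key observation is the last remark after Definition \ref{dd}: setting $\alpha=1$ in the $(\alpha,m)$-logarithmic convexity inequality \eqref{d2} recovers exactly the $m$-logarithmic convexity inequality \eqref{d1}. Consequently, any function $|f'|$ that is $m$-logarithmically convex on $[0,b/m]$ is automatically $(1,m)$-logarithmically convex there, so the hypotheses of Theorem \ref{t1} are satisfied with the parameter $\alpha=1$.

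First I would invoke Theorem \ref{t1} under this hypothesis and write down the resulting two-case bound. In the case $\eta=1$, the bound $(b-a)|f'(b/m)|^m/3$ is independent of $\alpha$, so it carries over verbatim. In the case $\eta<1$, I would substitute $\alpha=1$ into the expression
\begin{equation*}
\frac{-\alpha^{2}\ln^{2}\eta - 2\alpha\ln\eta + 2\eta^{\alpha} - 2}{\alpha^{3}\ln^{3}\eta}
\end{equation*}
appearing in Theorem \ref{t1}, obtaining immediately
\begin{equation*}
\frac{-\ln^{2}\eta - 2\ln\eta + 2\eta - 2}{\ln^{3}\eta},
\end{equation*}
which matches the right-hand side in the statement of the corollary.

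Since the ratio $\eta=|f'(a)|/|f'(b/m)|^m$ is defined identically in both statements, no further adjustment is needed, and the conclusion follows. There is no real obstacle here: the entire argument is a substitution $\alpha=1$ into the formula already established in Theorem \ref{t1}, justified by the compatibility between Definitions \ref{d} and \ref{dd}. If one preferred, the same bound could be re-derived from scratch by repeating the proof of Theorem \ref{t1} with the $m$-logarithmic convexity inequality in place of the $(\alpha,m)$-version, but this merely reproduces the specialization and offers no additional content.
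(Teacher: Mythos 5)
Your proposal is correct and matches the paper's intent exactly: the corollary is the specialization $\alpha=1$ of Theorem \ref{t1}, justified by the remark that $(1,m)$-logarithmic convexity coincides with $m$-logarithmic convexity, and the substitution into the $\eta<1$ expression yields precisely the stated bound. The paper offers no separate argument for this corollary, so your derivation is essentially the same as (indeed, supplies) the paper's proof.
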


\begin{corollary}
Let$\ I\supset \left[ 0,\infty \right) $ be an open interval and let $f:\
I\rightarrow \left( 0,\infty \right) $ be a differentiable function on $I$
such that $f^{\prime }\in L\left( a,b\right) $ for $0\leq a<b<\infty .$ If $%
\left\vert f^{\prime }\left( x\right) \right\vert $is $\alpha $%
-logarithmically convex on $\left[ 0,b\right] $ for $\alpha \in \left( 0,1%
\right] $ , then%
\begin{equation}
\left\vert \frac{f\left( a\right) +f\left( b\right) }{2}-\frac{1}{b-a}%
\int_{a}^{b}f\left( x\right) dx\right\vert \leq \left\{
\begin{array}{cc}
\frac{\left( b-a\right) }{3}\left\vert f^{\prime }\left( b\right)
\right\vert ,\text{ \ \ \ \ \ \ \ \ \ \ \ \ \ \ \ \ \ \ \ \ \ \ \ \ \ \ \ }
& \eta =1 \\
\frac{\left( b-a\right) }{2}\left\vert f^{\prime }\left( b\right)
\right\vert \frac{4\eta ^{\alpha }-4\alpha \ln \eta -2\alpha ^{2}\ln
^{2}\eta -4}{2\alpha ^{3}\ln ^{3}\eta }, & \eta <1%
\end{array}%
\right.  \notag
\end{equation}%
where $\eta =\left\vert f^{\prime }\left( a\right) \right\vert /\left\vert
f^{\prime }\left( b\right) \right\vert .$
\end{corollary}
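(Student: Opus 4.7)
The plan is to deduce this corollary as a direct specialization of Theorem \ref{t1}, exploiting the fact that an $\alpha$-logarithmically convex function on $[0,b]$ is precisely the $m=1$ case of the $(\alpha,m)$-logarithmically convex definition. Formally, if $|f'|$ satisfies
\[
|f'(tx+(1-t)y)| \leq |f'(x)|^{t^{\alpha}}|f'(y)|^{1-t^{\alpha}},
\]
then it also satisfies \eqref{d2} with $m=1$ on the interval $[0,b/m]=[0,b]$. So the hypotheses of Theorem \ref{t1} are met with $m=1$, and the conclusion of the theorem applies verbatim.

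Having reduced to Theorem \ref{t1}, I would substitute $m=1$ into each occurrence in the stated bound. The quantity $|f'(b/m)|^m$ collapses to $|f'(b)|$, and the ratio $\eta = |f'(a)|/|f'(b/m)|^m$ reduces to $\eta = |f'(a)|/|f'(b)|$, matching the corollary's definition. In the case $\eta = 1$, the bound $\tfrac{(b-a)}{3}|f'(b/m)|^m$ immediately becomes $\tfrac{(b-a)}{3}|f'(b)|$.

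The only thing that needs a brief verification is that the expression in the $\eta < 1$ branch is algebraically identical to the $m=1$ specialization of Theorem \ref{t1}'s expression. From the theorem we get the factor
\[
\frac{-\alpha^{2}\ln^{2}\eta - 2\alpha \ln \eta + 2\eta^{\alpha} - 2}{\alpha^{3}\ln^{3}\eta},
\]
and multiplying numerator and denominator by $2$ yields
\[
\frac{-2\alpha^{2}\ln^{2}\eta - 4\alpha \ln \eta + 4\eta^{\alpha} - 4}{2\alpha^{3}\ln^{3}\eta},
\]
which is exactly the expression appearing in the corollary after reordering terms in the numerator. This confirms agreement, and the proof is complete.

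No genuine obstacle is expected, since all the analytic work (application of Lemma \ref{l1}, the inequality $k^{m^{n}} \leq k^{mn}$, and the double-integral evaluations of $|s-t|\eta^{\alpha t}$ and $|s-t|\eta^{\alpha s}$) has already been carried out in the proof of Theorem \ref{t1}. The corollary is purely a matter of setting $m=1$ and simplifying; the only mild bookkeeping step is the algebraic normalization of the rational expression in $\ln \eta$ and $\eta^{\alpha}$ so that it matches the stated form.
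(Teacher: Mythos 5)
Your proposal is correct and matches the paper's (implicit) derivation: the corollary is obtained exactly by setting $m=1$ in Theorem \ref{t1}, whereupon $\left\vert f^{\prime }\left( b/m\right) \right\vert ^{m}$ becomes $\left\vert f^{\prime }\left( b\right) \right\vert$ and $\eta$ becomes $\left\vert f^{\prime }\left( a\right) \right\vert /\left\vert f^{\prime }\left( b\right) \right\vert$. Your check that the $\eta<1$ expression in the corollary is just the theorem's fraction with numerator and denominator multiplied by $2$ and the numerator reordered is accurate.
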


\begin{theorem}
\label{t2}Let$\ I\supset \left[ 0,\infty \right) $ be an open interval and
let $f:\ I\rightarrow \left( 0,\infty \right) $ be a differentiable function
on $I$ such that $f^{\prime }\in L\left( a,b\right) $ for $0\leq a<b<\infty
. $ If $\left\vert f^{\prime }\left( x\right) \right\vert ^{q}$ is an $%
\left( \alpha ,m\right) $-logarithmically convex on $\left[ 0,\frac{b}{m}%
\right] $ for $\left( \alpha ,m\right) \in \left( 0,1\right] ^{2}$ and $%
p,q>1 $ with $\frac{1}{p}+\frac{1}{q}=1,$ then%
\begin{eqnarray}
&&\left\vert \frac{f\left( a\right) +f\left( b\right) }{2}-\frac{1}{b-a}%
\int_{a}^{b}f\left( x\right) dx\right\vert \\
&\leq &\left\{
\begin{array}{cc}
\left( b-a\right) \left\vert f^{\prime }\left( \frac{b}{m}\right)
\right\vert ^{m}\left( \frac{2}{\left( p+1\right) \left( p+2\right) }\right)
^{\frac{1}{p}},\text{ \ \ \ \ \ \ \ \ \ \ \ \ \ \ \ \ \ \ \ \ \ \ \ \ \ } &
\eta =1 \\
\left( b-a\right) \left\vert f^{\prime }\left( \frac{b}{m}\right)
\right\vert ^{m}\left( \frac{2}{\left( p+1\right) \left( p+2\right) }\right)
^{\frac{1}{p}}\times \left( \frac{\eta \left( \alpha q,\alpha q\right) -1}{%
\ln \eta \left( \alpha q,\alpha q\right) }\right) ^{\frac{1}{q}}, & \eta <1%
\end{array}%
\right.  \notag
\end{eqnarray}%
where $\eta \left( \alpha ,\alpha \right) $ is same as Theorem \ref{t1}
\end{theorem}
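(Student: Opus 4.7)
The plan is to start, as in Theorem \ref{t1}, from the integral identity of Lemma \ref{l1}, but now pair it with H\"older's inequality instead of a direct absolute-value bound. First I would take absolute values in \eqref{a}, invoke the triangle inequality on $|f'(ta+(1-t)b)-f'(sa+(1-s)b)|$, and split the right-hand side into two double integrals
\[
\frac{b-a}{2}\int_0^1\!\!\int_0^1 |s-t|\,|f'(ta+(1-t)b)|\,dt\,ds
\;+\;\frac{b-a}{2}\int_0^1\!\!\int_0^1 |s-t|\,|f'(sa+(1-s)b)|\,dt\,ds.
\]
These two integrals are equal by the symmetry $(s,t)\mapsto(t,s)$, so I only need to estimate one and double the result.

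Next I would apply H\"older's inequality with exponents $p,q$ to each double integral, so that the factor $|s-t|$ is absorbed into
\[
\Bigl(\int_0^1\!\!\int_0^1 |s-t|^p\,dt\,ds\Bigr)^{1/p}=\Bigl(\tfrac{2}{(p+1)(p+2)}\Bigr)^{1/p},
\]
(a standard computation, split into the regions $s\ge t$ and $s<t$), while the other factor becomes
\[
\Bigl(\int_0^1\!\!\int_0^1 |f'(ta+(1-t)b)|^q\,dt\,ds\Bigr)^{1/q}
=\Bigl(\int_0^1 |f'(ta+(1-t)b)|^q\,dt\Bigr)^{1/q},
\]
since the integrand is $s$-independent.

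Now I would use the hypothesis that $|f'|^q$ is $(\alpha,m)$-logarithmically convex on $[0,b/m]$. Writing $b=m\cdot(b/m)$ and applying \eqref{d2} with $x=a$, $y=b/m$ gives
\[
|f'(ta+(1-t)b)|^q\le |f'(a)|^{q t^\alpha}|f'(b/m)|^{qm(1-t^\alpha)}
=|f'(b/m)|^{qm}\,\eta^{q t^\alpha},
\]
with $\eta=|f'(a)|/|f'(b/m)|^m$. If $\eta=1$ the integrand collapses to $|f'(b/m)|^{qm}$, integrates to $|f'(b/m)|^{qm}$, and raising to the $1/q$ power yields the first branch $(b-a)|f'(b/m)|^m\bigl(\tfrac{2}{(p+1)(p+2)}\bigr)^{1/p}$ after recombining the symmetric pair. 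If $\eta<1$, I would use the elementary inequality \eqref{1}, namely $\eta^{t^\alpha}\le \eta^{\alpha t}$ (valid because $t^\alpha\ge\alpha t$ for $t,\alpha\in(0,1]$ and $\eta<1$), to reduce to the elementary integral $\int_0^1 \eta^{q\alpha t}\,dt = (\eta^{q\alpha}-1)/(q\alpha\ln\eta)$, which is exactly the factor $(\eta(\alpha q,\alpha q)-1)/\ln\eta(\alpha q,\alpha q)$ appearing in the statement.

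I expect the main obstacle to be purely bookkeeping: making sure the two symmetric H\"older estimates combine into a single factor of $(b-a)$ (not $(b-a)/2$), and that the reduction via \eqref{1} from $\eta^{q t^\alpha}$ to $\eta^{q\alpha t}$ is applied in the right direction for $\eta<1$. Once these are handled, the computation of $\int\!\!\int|s-t|^p\,dt\,ds$ and of $\int_0^1\eta^{q\alpha t}\,dt$ are routine, and the two branches of the claimed bound follow immediately.
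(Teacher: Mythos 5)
Your proposal is correct and follows essentially the same route as the paper: Lemma \ref{l1}, the triangle inequality to split into two symmetric double integrals, H\"older's inequality with exponents $p,q$ separating $\left\vert s-t\right\vert $ from the derivative factor, the $\left( \alpha ,m\right) $-logarithmic convexity bound $\left\vert f^{\prime }\left( ta+\left( 1-t\right) b\right) \right\vert ^{q}\leq \left\vert f^{\prime }\left( \frac{b}{m}\right) \right\vert ^{qm}\eta ^{qt^{\alpha }}$, and the reduction $\eta ^{t^{\alpha }}\leq \eta ^{\alpha t}$ via (\ref{1}) for $\eta <1$. The only (immaterial) difference is that you apply H\"older before invoking the convexity hypothesis, whereas the paper applies the convexity bound first and then H\"older to $\int_{0}^{1}\int_{0}^{1}\left\vert s-t\right\vert \eta ^{t^{\alpha }}dtds$; both yield the identical factors $\left( \frac{2}{\left( p+1\right) \left( p+2\right) }\right) ^{\frac{1}{p}}$ and $\left( \frac{\eta ^{\alpha q}-1}{\alpha q\ln \eta }\right) ^{\frac{1}{q}}$.
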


\begin{proof}
Since $\left\vert f^{\prime }\right\vert ^{q}$ is an $\left( \alpha
,m\right) $-logarithmically convex on $\left[ 0,\frac{b}{m}\right] $, from
Lemma \ref{l1} and the well known H\"{o}lder inequality, we have%
\begin{eqnarray}
&&  \label{y} \\
&&\left\vert \frac{f\left( a\right) +f\left( b\right) }{2}-\frac{1}{b-a}%
\int_{a}^{b}f\left( x\right) dx\right\vert  \notag \\
&\leq &\frac{b-a}{2}\int_{0}^{1}\int_{0}^{1}\left\vert \left( f^{\prime
}\left( ta+\left( 1-t\right) b\right) \right) -\left( f^{\prime }\left(
sa+\left( 1-s\right) b\right) \right) \right\vert \left\vert s-t\right\vert
dtds  \notag \\
&\leq &\frac{b-a}{2}\int_{0}^{1}\int_{0}^{1}\left\vert s-t\right\vert
\left\vert f^{\prime }\left( a\right) \right\vert ^{t^{\alpha }}\left\vert
f^{\prime }\left( \frac{b}{m}\right) \right\vert ^{m\left( 1-t^{\alpha
}\right) }dtds  \notag \\
&&+\frac{b-a}{2}\int_{0}^{1}\int_{0}^{1}\left\vert s-t\right\vert \left\vert
f^{\prime }\left( a\right) \right\vert ^{s^{\alpha }}\left\vert f^{\prime
}\left( \frac{b}{m}\right) \right\vert ^{m\left( 1-s^{\alpha }\right) }dtds
\notag \\
&\leq &\frac{b-a}{2}\left\vert f^{\prime }\left( \frac{b}{m}\right)
\right\vert ^{m}\left( \int_{0}^{1}\int_{0}^{1}\left\vert s-t\right\vert
^{p}dtds\right) ^{\frac{1}{p}}  \notag \\
&&\times \left[ \left( \int_{0}^{1}\int_{0}^{1}\eta ^{qt^{\alpha
}}dtds\right) ^{\frac{1}{q}}+\left( \int_{0}^{1}\int_{0}^{1}\eta
^{qs^{\alpha }}dtds\right) ^{\frac{1}{q}}\right]  \notag
\end{eqnarray}%
If $\ \eta =1,$ by (\ref{1}), we obtain%
\begin{eqnarray*}
&&\left\vert \frac{f\left( a\right) +f\left( b\right) }{2}-\frac{1}{b-a}%
\int_{a}^{b}f\left( x\right) dx\right\vert \\
&\leq &\left( b-a\right) \left\vert f^{\prime }\left( \frac{b}{m}\right)
\right\vert ^{m}\left( \int_{0}^{1}\int_{0}^{1}\left\vert s-t\right\vert
^{p}dtds\right) ^{\frac{1}{p}} \\
&=&\left( b-a\right) \left\vert f^{\prime }\left( \frac{b}{m}\right)
\right\vert ^{m}\left( \frac{2}{\left( p+1\right) \left( p+2\right) }\right)
^{\frac{1}{p}}
\end{eqnarray*}%
If $\eta <1,$ by (\ref{1}), we obtain%
\begin{eqnarray}
&&\left\vert \frac{f\left( a\right) +f\left( b\right) }{2}-\frac{1}{b-a}%
\int_{a}^{b}f\left( x\right) dx\right\vert \\
&\leq &\frac{b-a}{2}\left\vert f^{\prime }\left( \frac{b}{m}\right)
\right\vert ^{m}\left( \int_{0}^{1}\int_{0}^{1}\left\vert s-t\right\vert
^{p}dtds\right) ^{\frac{1}{p}}  \notag \\
&&\times \left[ \left( \int_{0}^{1}\int_{0}^{1}\eta ^{qt^{\alpha
}}dtds\right) ^{\frac{1}{q}}+\left( \int_{0}^{1}\int_{0}^{1}\eta
^{qs^{\alpha }}dtds\right) ^{\frac{1}{q}}\right]  \notag \\
&=&\left( b-a\right) \left\vert f^{\prime }\left( \frac{b}{m}\right)
\right\vert ^{m}\left( \frac{2}{\left( p+1\right) \left( p+2\right) }\right)
^{\frac{1}{p}}\times \left( \frac{\eta \left( \alpha q,\alpha q\right) -1}{%
\ln \eta \left( \alpha q,\alpha q\right) }\right) ^{\frac{1}{q}}  \notag
\end{eqnarray}

\textit{which completes the proof.}
\end{proof}

\begin{corollary}
Let$\ I\supset \left[ 0,\infty \right) $ be an open interval and let
$f:\ I\rightarrow \left( 0,\infty \right) $ be a differentiable
function on $I$ such that $f^{\prime }\in L\left( a,b\right) $ for
$0\leq a<b<\infty .$ If $\left\vert f^{\prime }\left( x\right)
\right\vert ^{q}$is an $m$-logarithmically convex on $\left[ 0,\frac{b}{m}%
\right] $ for $m\in \left( 0,1\right] $ and $p=q=2,$ then%
\begin{equation}
\left\vert \frac{f\left( a\right) +f\left( b\right) }{2}-\frac{1}{b-a}%
\int_{a}^{b}f\left( x\right) dx\right\vert \leq \left\{
\begin{array}{cc}
\left( b-a\right) \left\vert f^{\prime }\left( \frac{b}{m}\right)
\right\vert ^{m}\sqrt{\frac{1}{6}},\text{ \ \ \ \ \ \ \ \ \ \ \ \ \ \ \ \ \
\ } & \eta =1 \\
\left( b-a\right) \left\vert f^{\prime }\left( \frac{b}{m}\right)
\right\vert ^{m}\sqrt{\frac{1}{6}}\left( \frac{\eta \left( 2,2\right) -1}{%
\ln \eta \left( 2,2\right) }\right) ^{\frac{1}{2}}, & \eta <1%
\end{array}%
\right.  \notag
\end{equation}
\end{corollary}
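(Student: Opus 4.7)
The plan is to obtain this corollary as an immediate specialization of Theorem \ref{t2}. As observed right after Definition \ref{dd}, every $m$-logarithmically convex function on $\left[0,\tfrac{b}{m}\right]$ is precisely the $\alpha = 1$ case of an $(\alpha, m)$-logarithmically convex function. Consequently, the hypothesis that $|f'|^{q}$ is $m$-logarithmically convex falls directly under the scope of Theorem \ref{t2} with $\alpha = 1$, and it remains only to substitute the conjugate exponents $p = q = 2$ and simplify.

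The first step is to simplify the H\"{o}lder factor: with $p = 2$ one computes $\left(\tfrac{2}{(p+1)(p+2)}\right)^{1/p} = \left(\tfrac{2}{12}\right)^{1/2} = \sqrt{1/6}$, which produces the common prefactor $(b-a)\,|f'(b/m)|^{m}\,\sqrt{1/6}$ appearing in both branches of the claimed bound. Next I would treat the two cases separately. When $\eta = 1$, Theorem \ref{t2} already returns exactly this prefactor, yielding the first branch. When $\eta < 1$, substituting $\alpha = 1$ and $q = 2$ turns $\alpha q$ into $2$, so the remaining factor $\left(\tfrac{\eta(\alpha q,\alpha q)-1}{\ln\eta(\alpha q,\alpha q)}\right)^{1/q}$ becomes $\left(\tfrac{\eta(2,2)-1}{\ln\eta(2,2)}\right)^{1/2}$, matching the stated second branch.

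Since every step is a direct substitution into the already established Theorem \ref{t2}, there is no real obstacle here; the only verifications required are the arithmetic identity $2/((p+1)(p+2)) = 1/6$ at $p = 2$ and the bookkeeping of the index $\alpha q = 2$ in the second branch. The expected main (though minor) difficulty is simply ensuring that the notation $\eta(\alpha q,\alpha q)$ inherited from Theorem \ref{t2} is interpreted consistently so that its $\alpha = 1$, $q = 2$ reduction reads as $\eta(2,2)$.
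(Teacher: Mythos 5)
Your proposal is correct and matches the paper's intended derivation: the corollary is an immediate specialization of Theorem \ref{t2} obtained by taking $\alpha=1$ (so that $(\alpha,m)$-logarithmic convexity reduces to $m$-logarithmic convexity) and $p=q=2$, with the arithmetic $\left(\tfrac{2}{(p+1)(p+2)}\right)^{1/p}=\sqrt{1/6}$ and the index reduction $\alpha q=2$ both checking out. The paper offers no separate proof for this corollary, so your substitution argument is exactly the argument it relies on.
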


\begin{corollary}
Let$\ I\supset \left[ 0,\infty \right) $ be an open interval and let $f:\
I\rightarrow \left( 0,\infty \right) $ be a differentiable function on $I$
such that $f^{\prime }\in L\left( a,b\right) $ for $0\leq a<b<\infty .$ If $%
\left\vert f^{\prime }\left( x\right) \right\vert $is $\alpha $%
-logarithmically convex on $\left[ 0,b\right] $ for $\alpha \in \left( 0,1%
\right] $ , then%
\begin{eqnarray*}
&&\left\vert \frac{f\left( a\right) +f\left( b\right) }{2}-\frac{1}{b-a}%
\int_{a}^{b}f\left( x\right) dx\right\vert \\
&\leq &\left\{
\begin{array}{cc}
\left( b-a\right) \left\vert f^{\prime }\left( b\right) \right\vert \left(
\frac{2}{\left( p+1\right) \left( p+2\right) }\right) ^{\frac{1}{p}},\text{
\ \ \ \ \ \ \ \ \ \ \ \ \ \ \ \ \ \ \ \ \ } & \eta =1 \\
\left( b-a\right) \left\vert f^{\prime }\left( b\right) \right\vert \left(
\frac{2}{\left( p+1\right) \left( p+2\right) }\right) ^{\frac{1}{p}}\left(
\frac{\eta \left( \alpha q,\alpha q\right) -1}{\ln \eta \left( \alpha
q,\alpha q\right) }\right) ^{\frac{1}{q}}, & \eta <1%
\end{array}%
\right.
\end{eqnarray*}

where $\eta =\left\vert f^{\prime }\left( a\right) \right\vert /\left\vert
f^{\prime }\left( b\right) \right\vert .$
\end{corollary}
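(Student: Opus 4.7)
The plan is to obtain this corollary as a direct specialization of Theorem \ref{t2} to the case $m=1$. By Definition \ref{dd}, setting $m=1$ collapses $(\alpha,m)$-logarithmic convexity to ordinary $\alpha$-logarithmic convexity on $[0,b/1]=[0,b]$, so the hypothesis that $|f'(x)|^{q}$ is $\alpha$-logarithmically convex on $[0,b]$ is exactly the $m=1$ instance of the hypothesis of Theorem \ref{t2}. Hence I would simply quote Theorem \ref{t2} and track what happens to the bound under $m=1$.

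Concretely, the first step is to substitute $m=1$ into the right-hand side of Theorem \ref{t2}: the factor $|f'(b/m)|^{m}$ becomes $|f'(b)|^{1}=|f'(b)|$, the interval $[0,b/m]$ becomes $[0,b]$, and the quantity $\eta=|f'(a)|/|f'(b/m)|^{m}$ becomes $\eta=|f'(a)|/|f'(b)|$, matching the definition of $\eta$ stated in the corollary. The exponents $p,q$ and the parameter $\alpha$ play no special role in the substitution and are carried over unchanged, so the prefactor $(2/((p+1)(p+2)))^{1/p}$ and, in the case $\eta<1$, the secondary factor $((\eta(\alpha q,\alpha q)-1)/\ln\eta(\alpha q,\alpha q))^{1/q}$ persist verbatim. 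The two-case structure ($\eta=1$ versus $\eta<1$) is inherited from the same case split in Theorem \ref{t2}.

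There is no real obstacle here: the work was already done in Theorem \ref{t2} via Lemma \ref{l1}, H\"older's inequality, and the auxiliary estimate \eqref{1}, and the corollary only records the value of the resulting bound at the parameter point $m=1$. The only bookkeeping worth noting is that the domain hypothesis $[0,b/m]$ collapses cleanly to $[0,b]$ and that the exponential integrals $\int_{0}^{1}\!\!\int_{0}^{1}\eta^{qt^{\alpha}}\,dt\,ds$ and $\int_{0}^{1}\!\!\int_{0}^{1}\eta^{qs^{\alpha}}\,dt\,ds$ evaluated in Theorem \ref{t2} do not depend on $m$ at all, so nothing in the $\eta<1$ case needs to be recomputed; it is purely a matter of reading off the final display with $m=1$.
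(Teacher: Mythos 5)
Your proposal is correct and is exactly the route the paper intends: the corollary is the $m=1$ specialization of Theorem \ref{t2}, under which $[0,b/m]$ becomes $[0,b]$, $\left\vert f^{\prime }\left( \frac{b}{m}\right) \right\vert ^{m}$ becomes $\left\vert f^{\prime }\left( b\right) \right\vert $, and $\eta $ becomes $\left\vert f^{\prime }\left( a\right) \right\vert /\left\vert f^{\prime }\left( b\right) \right\vert $, with the $p$- and $q$-dependent factors carried over unchanged. The only point worth adding is that the corollary as printed hypothesizes $\alpha $-logarithmic convexity of $\left\vert f^{\prime }\right\vert $ rather than of $\left\vert f^{\prime }\right\vert ^{q}$, so to invoke Theorem \ref{t2} one should either read this as a typographical slip or note that raising an $\alpha $-logarithmically convex function to the power $q>0$ preserves $\alpha $-logarithmic convexity.
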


\begin{theorem}
Let$\ I\supset \left[ 0,\infty \right) $ be an open interval and let $f:\
I\rightarrow \left( 0,\infty \right) $ be a differentiable function on $I$
such that $f^{\prime }\in L\left( a,b\right) $ for $0\leq a<b<\infty .$ If $%
\left\vert f^{\prime }\left( x\right) \right\vert ^{q}$ is $\left( \alpha
,m\right) $-logarithmically convex on $\left[ 0,\frac{b}{m}\right] $ for $%
\left( \alpha ,m\right) \in \left( 0,1\right] ^{2}$, and then%
\begin{eqnarray}
&& \\
&&\left\vert \frac{f\left( a\right) +f\left( b\right) }{2}-\frac{1}{b-a}%
\int_{a}^{b}f\left( x\right) dx\right\vert  \notag \\
&\leq &\left\{
\begin{array}{c}
\frac{b-a}{3}\left\vert f^{\prime }\left( \frac{b}{m}\right) \right\vert
^{m},\text{ \ \ \ \ \ \ \ \ \ \ \ \ \ \ \ \ \ \ \ \ \ \ \ \ \ \ \ \ \ \ \ \
\ \ \ \ \ \ \ \ \ \ \ \ \ \ \ \ \ \ \ \ \ \ \ \ \ \ \ \ \ \ \ \ \ \ \ \ \ }%
\eta =1 \\
\frac{b-a}{2}\left( \frac{1}{3}\right) ^{1-\frac{1}{q}}\left\vert f^{\prime
}\left( \frac{b}{m}\right) \right\vert ^{m}\left\{ \left[ \frac{2\varphi -2}{%
\left[ \ln \varphi \right] ^{3}}-\frac{\varphi +1}{\left[ \ln \varphi \right]
^{2}}-\frac{1-\varphi }{2\ln \varphi }\right] ^{\frac{1}{q}}+\left[ \frac{%
\varphi -1}{\left[ \ln \varphi \right] ^{2}}-\frac{\varphi +1}{2\ln \varphi }%
\right] ^{\frac{1}{q}}\right\} ,\text{\ }\eta <1%
\end{array}%
\right.  \notag
\end{eqnarray}%
where $\eta \left( \alpha ,\alpha \right) $ is same as Theorem \ref{t1}, and
we take $\eta \left( \alpha q,\alpha q\right) =\varphi $ .$\allowbreak $
\end{theorem}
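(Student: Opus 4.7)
The plan is to start from Lemma \ref{l1}, pass to absolute values via the triangle inequality, apply the power mean inequality (rather than H\"{o}lder's, as in Theorem \ref{t2}) to each of the two resulting summands, exploit the $(\alpha,m)$-logarithmic convexity of $|f'|^q$, and finally evaluate two explicit double integrals.

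Proceeding as in the proof of Theorem \ref{t1}, Lemma \ref{l1} together with the triangle inequality majorise the left-hand side by $\frac{b-a}{2}(I_1 + I_2)$, where
\[I_1 = \int_0^1 \int_0^1 |s-t|\,|f'(ta+(1-t)b)|\,dt\,ds\]
and $I_2$ is the analogous expression obtained by replacing $ta+(1-t)b$ with $sa+(1-s)b$. I then apply the power mean inequality to each $I_k$ with respect to the weight $d\mu = |s-t|\,dt\,ds$, whose total mass equals $\int_0^1 \int_0^1 |s-t|\,dt\,ds = 1/3$. This produces the factor $(1/3)^{1-1/q}$ visible in the statement and reduces $I_k$ to the $q$-th root of a $|s-t|$-weighted integral of $|f'|^q$.

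Invoking the $(\alpha,m)$-logarithmic convexity of $|f'|^q$ gives $|f'(ta+(1-t)b)|^q \leq |f'(b/m)|^{mq}\,\eta^{qt^{\alpha}}$, and similarly in the $s$-variable for $I_2$. When $\eta=1$ the exponential factor disappears, so each $I_k$ collapses to $\tfrac{1}{3}\,|f'(b/m)|^m$ and the two copies combine to yield $\frac{b-a}{3}\,|f'(b/m)|^m$. When $\eta<1$, inequality (\ref{1}) applied with $k=\eta^{q}$ and with the pair $(m,n)$ in (\ref{1}) played by $(t,\alpha)$ gives $\eta^{qt^{\alpha}} \leq \eta^{q\alpha t} = \varphi^{t}$, where $\varphi = \eta^{q\alpha} = \eta(\alpha q,\alpha q)$, and the analogous bound in $s$ applies to $I_2$.

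The remaining task --- and the main obstacle --- is the explicit evaluation of the two double integrals $\int_0^1 \int_0^1 |s-t|\,\varphi^{t}\,dt\,ds$ and $\int_0^1 \int_0^1 |s-t|\,\varphi^{s}\,dt\,ds$. I would split $|s-t|$ on $\{s>t\}$ and $\{s<t\}$, integrate the inner variable to produce a quadratic polynomial factor in the outer variable, and then integrate by parts twice against $\varphi^{t}$ (respectively $\varphi^{s}$). The resulting closed forms involve $\ln\varphi$, $[\ln\varphi]^2$ and $[\ln\varphi]^3$, matching the two bracketed expressions in the statement. The calculation is mechanical but sign-sensitive, since $\ln\varphi<0$ for $\varphi<1$; one must track signs carefully when assembling the two integrated-by-parts pieces to recover the displayed expression exactly.
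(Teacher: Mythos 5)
Your proposal follows the paper's own argument essentially step for step: Lemma \ref{l1} plus the triangle inequality, the power mean inequality with weight $\left\vert s-t\right\vert \,dt\,ds$ of total mass $\tfrac{1}{3}$ producing the factor $\left( \tfrac{1}{3}\right) ^{1-\frac{1}{q}}$, the $(\alpha ,m)$-logarithmic convexity of $\left\vert f^{\prime }\right\vert ^{q}$, and inequality (\ref{1}) to replace $\eta ^{qt^{\alpha }}$ by $\varphi ^{t}$ before evaluating the two weighted double integrals. The only difference is that you sketch rather than carry out the final integrations, but the method you describe is exactly what produces the displayed closed forms.
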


\begin{proof}
Since $\left\vert f^{\prime }\right\vert ^{q}$ is an $\left( \alpha
,m\right) $-logarithmically convex on $\left[ 0,\frac{b}{m}\right] $, for $%
q\geq 1$, from Lemma \ref{l1} and the well known power mean integral
inequality, we get
\begin{eqnarray*}
&&\left\vert \frac{f\left( a\right) +f\left( b\right) }{2}-\frac{1}{b-a}%
\int_{a}^{b}f\left( x\right) dx\right\vert \\
&\leq &\frac{b-a}{2}\int_{0}^{1}\int_{0}^{1}\left\vert \left( f^{\prime
}\left( ta+\left( 1-t\right) b\right) \right) -\left( f^{\prime }\left(
sa+\left( 1-s\right) b\right) \right) \right\vert \left\vert s-t\right\vert
dtds \\
&\leq &\frac{b-a}{2}\left( \int_{0}^{1}\int_{0}^{1}\left\vert s-t\right\vert
dtds\right) ^{1-\frac{1}{q}}\left( \int_{0}^{1}\int_{0}^{1}\left\vert
s-t\right\vert \left\vert f^{\prime }\left( ta+\left( 1-t\right) b\right)
\right\vert ^{q}dtds\right) ^{\frac{1}{q}} \\
&&+\frac{b-a}{2}\left( \int_{0}^{1}\int_{0}^{1}\left\vert s-t\right\vert
dtds\right) ^{1-\frac{1}{q}}\left( \int_{0}^{1}\int_{0}^{1}\left\vert
s-t\right\vert \left\vert f^{\prime }\left( sa+\left( 1-s\right) b\right)
\right\vert ^{q}dtds\right) ^{\frac{1}{q}} \\
&\leq &\frac{b-a}{2}\left\vert f^{\prime }\left( \frac{b}{m}\right)
\right\vert ^{m}\left( \int_{0}^{1}\int_{0}^{1}\left\vert s-t\right\vert
dtds\right) ^{1-\frac{1}{q}}\left( \int_{0}^{1}\int_{0}^{1}\left\vert
s-t\right\vert \eta ^{qt^{\alpha }}dtds\right) ^{\frac{1}{q}} \\
&&+\frac{b-a}{2}\left\vert f^{\prime }\left( \frac{b}{m}\right) \right\vert
^{m}\left( \int_{0}^{1}\int_{0}^{1}\left\vert s-t\right\vert dtds\right) ^{1-%
\frac{1}{q}}\left( \int_{0}^{1}\int_{0}^{1}\left\vert s-t\right\vert \eta
^{qs^{\alpha }}dtds\right) ^{\frac{1}{q}}
\end{eqnarray*}%
When $\eta =1,$ by (\ref{1}), we obtain
\begin{eqnarray*}
&&\left\vert \frac{f\left( a\right) +f\left( b\right) }{2}-\frac{1}{b-a}%
\int_{a}^{b}f\left( x\right) dx\right\vert \\
&\leq &\frac{b-a}{2}\left( \frac{1}{3}\right) ^{1-\frac{1}{q}}\left\vert
f^{\prime }\left( \frac{b}{m}\right) \right\vert ^{m}\left(
\int_{0}^{1}\int_{0}^{1}\left\vert s-t\right\vert dtds\right) ^{\frac{1}{q}}
\\
&&+\frac{b-a}{2}\left( \frac{1}{3}\right) ^{1-\frac{1}{q}}\left\vert
f^{\prime }\left( \frac{b}{m}\right) \right\vert ^{m}\left(
\int_{0}^{1}\int_{0}^{1}\left\vert s-t\right\vert dtds\right) ^{\frac{1}{q}}
\\
&=&\frac{b-a}{3}\left\vert f^{\prime }\left( \frac{b}{m}\right) \right\vert
^{m}
\end{eqnarray*}%
When $\eta <1,$ by (\ref{1}), we obtain%
\begin{eqnarray*}
&&\left\vert \frac{f\left( a\right) +f\left( b\right) }{2}-\frac{1}{b-a}%
\int_{a}^{b}f\left( x\right) dx\right\vert \\
&\leq &\frac{b-a}{2}\left( \frac{1}{3}\right) ^{1-\frac{1}{q}}\left\vert
f^{\prime }\left( \frac{b}{m}\right) \right\vert ^{m}\left(
\int_{0}^{1}\int_{0}^{1}\left\vert s-t\right\vert \eta ^{\alpha
qt}dtds\right) ^{\frac{1}{q}} \\
&&+\frac{b-a}{2}\left( \frac{1}{3}\right) ^{1-\frac{1}{q}}\left\vert
f^{\prime }\left( \frac{b}{m}\right) \right\vert ^{m}\left(
\int_{0}^{1}\int_{0}^{1}\left\vert s-t\right\vert \eta ^{\alpha
qs}dtds\right) ^{\frac{1}{q}} \\
&=&\frac{b-a}{2}\left( \frac{1}{3}\right) ^{1-\frac{1}{q}}\left\vert
f^{\prime }\left( \frac{b}{m}\right) \right\vert ^{m} \\
&&\times \left\{ \left[ \frac{2\eta \left( \alpha q,\alpha q\right) -2}{%
\left[ \ln \left( \eta \left( \alpha q,\alpha q\right) \right) \right] ^{3}}-%
\frac{\eta \left( \alpha q,\alpha q\right) +1}{\left[ \ln \left( \eta \left(
\alpha q,\alpha q\right) \right) \right] ^{2}}-\frac{1-\eta \left( \alpha
q,\alpha q\right) }{2\ln \left( \eta \left( \alpha q,\alpha q\right) \right)
}\right] ^{\frac{1}{q}}\right. \\
&&+\left. \left[ \frac{\eta \left( \alpha q,\alpha q\right) -1}{\left[ \ln
\left( \eta \left( \alpha q,\alpha q\right) \right) \right] ^{2}}-\frac{\eta
\left( \alpha q,\alpha q\right) +1}{2\ln \left( \eta \left( \alpha q,\alpha
q\right) \right) }\right] ^{\frac{1}{q}}\right\}
\end{eqnarray*}%
\textit{which completes the proof.}
\end{proof}

\begin{corollary}
Let$\ I\supset \left[ 0,\infty \right) $ be an open interval and let $f:\
I\rightarrow \left( 0,\infty \right) $ be a differentiable function on $I$
such that $f^{\prime }\in L\left( a,b\right) $ for $0\leq a<b<\infty .$ If $%
\left\vert f^{\prime }\left( x\right) \right\vert ^{q}$is $m$%
-logarithmically convex on $\left[ 0,\frac{b}{m}\right] $ for $m\in \left(
0,1\right] $, then%
\begin{eqnarray*}
&&\left\vert \frac{f\left( a\right) +f\left( b\right) }{2}-\frac{1}{b-a}%
\int_{a}^{b}f\left( x\right) dx\right\vert \\
&\leq &\left\{
\begin{array}{c}
\frac{b-a}{3}\left\vert f^{\prime }\left( \frac{b}{m}\right) \right\vert
^{m},\text{ \ \ \ \ \ \ \ \ \ \ \ \ \ \ \ \ \ \ \ \ \ \ \ \ \ \ \ \ \ \ \ \
\ \ \ \ \ \ \ \ \ \ \ \ \ \ \ \ \ \ \ \ \ \ \ \ \ \ \ }\eta =1 \\
\frac{\left( b-a\right) }{2}\left( \frac{1}{3}\right) ^{1-\frac{1}{q}%
}\left\vert f^{\prime }\left( \frac{b}{m}\right) \right\vert ^{m}\text{ \ \
\ \ \ \ \ \ \ \ \ \ \ \ \ \ \ \ \ \ \ \ \ \ \ \ \ \ \ \ \ \ \ \ \ \ \ \ \ \
\ \ \ \ \ \ \ \ \ \ \ \ \ \ \ \ \ \ \ \ \ \ \ \ \ \ \ \ \ \ \ \ \ \ \ \ \ \ }
\\
\times \left\{ \left[ \frac{2\eta \left( q,q\right) -2}{\left[ \ln \eta
\left( q,q\right) \right] ^{3}}-\frac{\eta \left( q,q\right) +1}{\left[ \ln
\eta \left( q,q\right) \right] ^{2}}-\frac{1-\eta \left( q,q\right) }{2\ln
\eta \left( q,q\right) }\right] ^{\frac{1}{q}}+\left[ \frac{\eta \left(
q,q\right) -1}{\left[ \ln \eta \left( q,q\right) \right] ^{2}}-\frac{\eta
\left( q,q\right) +1}{2\ln \eta \left( q,q\right) }\right] ^{\frac{1}{q}%
}\right\} ,\eta <1%
\end{array}%
\right.
\end{eqnarray*}
\end{corollary}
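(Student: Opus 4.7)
The plan is to follow the same template as Theorems \ref{t1} and \ref{t2}: start with Lemma \ref{l1}, apply the triangle inequality inside the double integral, and then bound each of the two resulting integrals. The only new ingredient here (compared to Theorem \ref{t2}) is that instead of Hölder's inequality we use the power mean inequality with weight $|s-t|$, which is the natural tool when the exponent $q\geq 1$ is arbitrary rather than being part of a conjugate pair.

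First I would write
\begin{equation*}
\left|\tfrac{f(a)+f(b)}{2}-\tfrac{1}{b-a}\int_a^b f(x)dx\right|\leq \tfrac{b-a}{2}\int_0^1\!\!\int_0^1 |s-t|\,\bigl(|f'(ta+(1-t)b)|+|f'(sa+(1-s)b)|\bigr)\,dt\,ds.
\end{equation*}
Then for each of the two summands I would apply the power mean inequality in the form $\int\!\!\int w\,g \leq (\int\!\!\int w)^{1-1/q}(\int\!\!\int w\,g^q)^{1/q}$ with weight $w=|s-t|$ and $g=|f'(\cdot)|$. Using $\int_0^1\!\!\int_0^1|s-t|\,dt\,ds=1/3$ produces the prefactor $(1/3)^{1-1/q}$. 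On the remaining integrals I invoke $(\alpha,m)$-logarithmic convexity of $|f'|^q$ together with inequality (\ref{1}), which replaces $|f'(a)|^{qt^\alpha}|f'(b/m)|^{mq(1-t^\alpha)}$ by $|f'(b/m)|^{mq}\eta^{q t^\alpha}\leq |f'(b/m)|^{mq}\eta^{\alpha q t}$ when $\eta\leq 1$, and similarly in the $s$-variable.

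The case $\eta=1$ is then immediate, giving the bound $\frac{b-a}{3}|f'(b/m)|^m$. In the case $\eta<1$ the main computational work is to evaluate the two weighted integrals
\begin{equation*}
J_1=\int_0^1\!\!\int_0^1 |s-t|\,\eta^{\alpha q t}\,dt\,ds,\qquad J_2=\int_0^1\!\!\int_0^1 |s-t|\,\eta^{\alpha q s}\,dt\,ds,
\end{equation*}
writing $|s-t|$ by splitting the unit square along $s=t$ (or, in $J_1$, by fixing $t$ and integrating $|s-t|$ in $s$ explicitly to reduce to a single integral in $t$, then integrating by parts twice against $\eta^{\alpha q t}$). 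Denoting $\varphi=\eta(\alpha q,\alpha q)$ and keeping track of the three resulting terms in $J_1$ and the two in $J_2$ should reproduce precisely the bracketed quantities $\bigl[\tfrac{2\varphi-2}{\ln^3\varphi}-\tfrac{\varphi+1}{\ln^2\varphi}-\tfrac{1-\varphi}{2\ln\varphi}\bigr]$ and $\bigl[\tfrac{\varphi-1}{\ln^2\varphi}-\tfrac{\varphi+1}{2\ln\varphi}\bigr]$, after which we take $q$-th roots and sum, matching the stated inequality.

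The main obstacle is purely the bookkeeping in evaluating $J_1$: because the weight $\eta^{\alpha q t}$ depends only on $t$ while $|s-t|$ couples the variables, the inner $s$-integral yields a piecewise quadratic in $t$ (namely $\tfrac12[t^2+(1-t)^2]$), and integrating this against $\eta^{\alpha q t}$ produces the $1/\ln^3\varphi$, $1/\ln^2\varphi$, $1/\ln\varphi$ terms through repeated integration by parts. I would carry out this one computation in full, then observe that $J_2$ is handled analogously (and more simply, since there the inner $t$-integral against $\eta^{\alpha q s}$ factors out immediately, leaving only a linear-in-$s$ weight to integrate).
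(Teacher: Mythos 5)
Your route is the same as the paper's: the corollary is simply the case $\alpha=1$ of the preceding theorem, and that theorem is proved exactly as you outline --- Lemma \ref{l1}, the triangle inequality inside the double integral, the power mean inequality with weight $|s-t|$ producing the prefactor $\left(\int_0^1\int_0^1|s-t|\,dt\,ds\right)^{1-\frac1q}=\left(\frac13\right)^{1-\frac1q}$, then $(\alpha,m)$-logarithmic convexity of $|f'|^q$ combined with inequality (\ref{1}). The cleanest write-up would just cite the theorem with $\alpha=1$ rather than re-derive it, but that is a matter of economy, not correctness of the strategy.

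There is, however, a genuine flaw in your plan for the final computation. Writing $\varphi=\eta^{\alpha q}$, the two integrals
\begin{equation*}
J_1=\int_0^1\!\!\int_0^1 |s-t|\,\varphi^{t}\,dt\,ds,\qquad J_2=\int_0^1\!\!\int_0^1 |s-t|\,\varphi^{s}\,dt\,ds
\end{equation*}
are \emph{equal} by the symmetry $s\leftrightarrow t$; in particular the inner $t$-integral in $J_2$ is $\int_0^1|s-t|\,dt=\frac{s^2+(1-s)^2}{2}$, quadratic in $s$, not linear as you assert. A correct evaluation gives
\begin{equation*}
J_1=J_2=\frac{2\varphi-2}{\ln^3\varphi}-\frac{\varphi+1}{\ln^2\varphi}+\frac{\varphi-1}{2\ln\varphi},
\end{equation*}
i.e.\ the \emph{first} bracketed expression in the statement (note $-\frac{1-\varphi}{2\ln\varphi}=\frac{\varphi-1}{2\ln\varphi}$). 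The second bracketed expression, $\frac{\varphi-1}{\ln^2\varphi}-\frac{\varphi+1}{2\ln\varphi}$, is what one gets from $\int_0^1\int_0^1(s-t)\varphi^{s}\,dt\,ds$ with the absolute value dropped, and a numerical check (e.g.\ $\varphi=e^{-1}$, where $J_2\approx 0.212$ while the second bracket $\approx 0.052$) shows it is strictly smaller than $J_2$. So your stated goal of ``reproducing precisely the bracketed quantities'' cannot be achieved by a correct computation: the honest bound has the first bracket appearing twice, and the second bracket as printed (here and in the parent theorem) does not bound $J_2$ from above. You should either prove the inequality with both brackets replaced by the first one, or explicitly invoke the $s\leftrightarrow t$ symmetry to reduce to a single integral --- as written, the step justifying the second bracket would fail.
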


\begin{corollary}
Let$\ I\supset \left[ 0,\infty \right) $ be an open interval and let $f:\
I\rightarrow \left( 0,\infty \right) $ be a differentiable function on $I$
such that $f^{\prime }\in L\left( a,b\right) $ for $0\leq a<b<\infty .$ If $%
\left\vert f^{\prime }\left( x\right) \right\vert $ is $\alpha $%
-logarithmically convex on $\left[ 0,b\right] $ for $\alpha \in \left( 0,1%
\right] $ , then

\begin{eqnarray*}
&&\left\vert \frac{f\left( a\right) +f\left( b\right) }{2}-\frac{1}{b-a}%
\int_{a}^{b}f\left( x\right) dx\right\vert \\
&\leq &\left\{
\begin{array}{cc}
\left( b-a\right) \left( \frac{1}{3}\right) \left\vert f^{\prime }\left(
b\right) \right\vert ,\text{ \ \ \ \ \ \ \ \ \ \ \ \ \ \ \ \ \ \ \ \ \ \ \ \
\ \ \ \ \ \ \ \ \ \ \ \ \ \ \ \ \ \ \ \ \ \ \ \ \ \ \ \ \ } & \eta =1 \\
\begin{array}{c}
\frac{\left( b-a\right) }{2}\left( \frac{1}{3}\right) ^{1-\frac{1}{q}%
}\left\vert f^{\prime }\left( b\right) \right\vert \left( \left[ \frac{2\eta
\left( \alpha q,\alpha q\right) -2}{\left[ \ln \left( \eta \left( \alpha
q,\alpha q\right) \right) \right] ^{3}}-\frac{\eta \left( \alpha q,\alpha
q\right) +1}{\left[ \ln \left( \eta \left( \alpha q,\alpha q\right) \right) %
\right] ^{2}}-\frac{1-\eta \left( \alpha q,\alpha q\right) }{2\ln \left(
\eta \left( \alpha q,\alpha q\right) \right) }\right] ^{\frac{1}{q}}\right.
\\
+\left. \left[ \frac{\eta \left( \alpha q,\alpha q\right) -1}{\left[ \ln
\left( \eta \left( \alpha q,\alpha q\right) \right) \right] ^{2}}-\frac{\eta
\left( \alpha q,\alpha q\right) +1}{2\ln \left( \eta \left( \alpha q,\alpha
q\right) \right) }\right] ^{\frac{1}{q}}\right)%
\end{array}%
, & \eta <1%
\end{array}%
\right.
\end{eqnarray*}

where $\eta =\left\vert f^{\prime }\left( a\right) \right\vert /\left\vert
f^{\prime }\left( b\right) \right\vert .$
\end{corollary}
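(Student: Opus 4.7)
The plan is to derive the corollary as a direct specialization of the preceding Theorem (the power-mean version) by setting $m=1$. First I note that Definition \ref{dd} reduces to Definition \ref{ddd} when $m=1$: for every $x,y\in[0,b]$ and $t\in[0,1]$,
\begin{equation*}
|f'(tx+(1-t)y)|^{q}\le |f'(x)|^{q\,t^{\alpha}}|f'(y)|^{q(1-t^{\alpha})},
\end{equation*}
so the $\alpha$-logarithmic convexity of $|f'|$ (raised to the $q$-th power in the form needed by the theorem) together with $m=1$ places us exactly in the hypotheses of the theorem on the interval $[0,b/m]=[0,b]$.

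Next I substitute $m=1$ throughout the bound produced by the theorem. The multiplicative factor $|f'(b/m)|^{m}$ collapses to $|f'(b)|$, and the quantity $\eta$ defined in Theorem \ref{t1} becomes
\begin{equation*}
\eta=\frac{|f'(a)|}{|f'(b/m)|^{m}}\Bigg|_{m=1}=\frac{|f'(a)|}{|f'(b)|},
\end{equation*}
matching the $\eta$ stated in the corollary. The auxiliary quantity $\eta(\alpha q,\alpha q)=\varphi$ is purely a function of $\eta$, $\alpha$, and $q$, and so is unchanged by the substitution $m=1$.

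The two cases $\eta=1$ and $\eta<1$ are inherited verbatim from the theorem. In the case $\eta=1$ the bound is $\tfrac{b-a}{3}|f'(b/m)|^{m}$, which becomes $(b-a)\cdot\tfrac{1}{3}|f'(b)|$ as claimed. In the case $\eta<1$ the factor $(\tfrac{1}{3})^{1-1/q}$ and the bracketed terms involving $\varphi=\eta(\alpha q,\alpha q)$ carry over unchanged, so the stated expression follows immediately. There is no real obstacle here; the only point requiring care is to verify that the $m=1$ specialization of $(\alpha,m)$-log-convex matches the $\alpha$-log-convex hypothesis on the correct interval $[0,b]$ (rather than $[0,b/m]$), which is immediate since the two intervals coincide when $m=1$. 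Hence the corollary follows directly from the theorem.
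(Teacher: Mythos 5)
Your proposal is correct and matches the paper's (implicit) argument exactly: the paper offers no separate proof of this corollary, treating it as the direct specialization $m=1$ of the preceding power-mean theorem, which is precisely what you do. Your added remark that $\alpha$-log-convexity of $|f'|$ implies that of $|f'|^{q}$ (needed to meet the theorem's hypothesis) is a worthwhile detail the paper glosses over.
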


\begin{theorem}
\label{t4}Let $f:I\subset
\mathbb{R}
_{+}\rightarrow
\mathbb{R}
_{+}$ be differentiable on $I^{\circ },$ $a,b\in I$, with $a<b$ and $%
f^{\prime }\in L\left( \left[ a,b\right] \right) .$ If $\left\vert f^{\prime
}\right\vert $ is an $\left( \alpha ,m\right) $-logarithmically convex $%
\left[ 0,\frac{b}{m}\right] $ for $\left( \alpha ,m\right) \in \left( 0,1%
\right] ^{2}$ and $\mu _{1},\mu _{2},\tau _{1},\tau _{2}>0$ with $\mu
_{1}+\tau _{1}=1$ and $\mu _{2}+\tau _{2}=1$, then%
\begin{eqnarray}
&& \\
&&\left\vert \frac{f\left( a\right) +f\left( b\right) }{2}-\frac{1}{b-a}%
\int_{a}^{b}f\left( x\right) dx\right\vert  \notag \\
&\leq &\left\{
\begin{array}{cc}
\frac{\left( b-a\right) }{2}\left\vert f^{\prime }\left( \frac{b}{m}\right)
\right\vert ^{m}\left[ \frac{2\mu _{1}^{3}}{\left( 2\mu _{1}+1\right) \left(
\mu _{1}+1\right) }+\frac{2\mu _{2}^{3}}{\left( 2\mu _{2}+1\right) \left(
\mu _{2}+1\right) }+\tau _{1}+\tau _{2}\right] , & \eta =1 \\
\begin{array}{c}
\frac{\left( b-a\right) }{2}\left\vert f^{\prime }\left( \frac{b}{m}\right)
\right\vert ^{m}\left\{ \frac{2\mu _{1}^{3}}{\left( 2\mu _{1}+1\right)
\left( \mu _{1}+1\right) }+\frac{2\mu _{2}^{3}}{\left( 2\mu _{2}+1\right)
\left( \mu _{2}+1\right) }\right. \\
+\left. \tau _{1}\frac{\eta \left( \frac{\alpha }{\tau _{1}},\frac{\alpha }{%
\tau _{1}}\right) -1}{\ln \eta \left( \frac{\alpha }{\tau _{1}},\frac{\alpha
}{\tau _{1}}\right) }+\tau _{2}\frac{\eta \left( \frac{\alpha }{\tau _{2}},%
\frac{\alpha }{\tau _{2}}\right) -1}{\ln \eta \left( \frac{\alpha }{\tau _{2}%
},\frac{\alpha }{\tau _{2}}\right) }\right\} ,%
\end{array}%
\text{ \ \ \ \ \ \ \ \ \ \ \ \ } & \eta <1%
\end{array}%
\right.  \notag
\end{eqnarray}%
where $\eta \left( \alpha ,\alpha \right) $ is same as Theorem \ref{t1}.
\end{theorem}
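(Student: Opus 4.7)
The plan is to mirror the proofs of Theorems \ref{t1}--\ref{t2}, but use Young's inequality (in its weighted form $xy\le \mu\, x^{1/\mu}+\tau\, y^{1/\tau}$ for $\mu+\tau=1$) in place of H\"older or the power-mean step, applied \emph{twice} with the two independent parameter pairs $(\mu_1,\tau_1)$ and $(\mu_2,\tau_2)$. First, I would invoke Lemma \ref{l1} and the triangle inequality to reach
\[
\frac{b-a}{2}\!\iint_{[0,1]^2}\!|s-t|\bigl|f'(ta+(1-t)b)\bigr|\,dt\,ds + \frac{b-a}{2}\!\iint_{[0,1]^2}\!|s-t|\bigl|f'(sa+(1-s)b)\bigr|\,dt\,ds.
\]
Writing $ta+(1-t)b = ta + m(1-t)(b/m)$ and using the $(\alpha,m)$-log-convexity of $|f'|$ produces bounds $|f'(a)|^{t^\alpha}|f'(b/m)|^{m(1-t^\alpha)}$ and the $s$-analog; factoring $|f'(b/m)|^m$ out and introducing $\eta=|f'(a)|/|f'(b/m)|^m$ reduces the task to estimating $\iint |s-t|\,\eta^{t^\alpha}\,dt\,ds$ and $\iint |s-t|\,\eta^{s^\alpha}\,dt\,ds$.

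Next, I would apply Young's inequality to each of these two double integrals independently, using the pair $(\mu_1,\tau_1)$ on the first and $(\mu_2,\tau_2)$ on the second:
\[
|s-t|\,\eta^{t^\alpha}\le \mu_1|s-t|^{1/\mu_1}+\tau_1\eta^{t^\alpha/\tau_1},\qquad
|s-t|\,\eta^{s^\alpha}\le \mu_2|s-t|^{1/\mu_2}+\tau_2\eta^{s^\alpha/\tau_2}.
\]
Integrating gives four pieces. For the $|s-t|^{1/\mu_i}$ terms I would use the standard evaluation $\iint_{[0,1]^2}|s-t|^k\,dt\,ds=\frac{2}{(k+1)(k+2)}$ with $k=1/\mu_i$; multiplying by $\mu_i$ yields exactly $\dfrac{2\mu_i^3}{(\mu_i+1)(2\mu_i+1)}$.

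For the $\eta$-pieces, when $\eta=1$ the integrals are trivially $\tau_1$ and $\tau_2$, which explains the $\tau_1+\tau_2$ tail in the $\eta=1$ branch. When $\eta<1$, I would invoke inequality (\ref{1}) in the form $t^\alpha\ge \alpha t$ (which gives $\eta^{t^\alpha/\tau_i}\le \eta^{\alpha t/\tau_i}$ since $\eta<1$), then evaluate $\int_0^1 \eta^{\alpha t/\tau_i}\,dt=\frac{\eta^{\alpha/\tau_i}-1}{\ln \eta^{\alpha/\tau_i}}$, which in the paper's notation is $\frac{\eta(\alpha/\tau_i,\alpha/\tau_i)-1}{\ln \eta(\alpha/\tau_i,\alpha/\tau_i)}$. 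Summing the four contributions and multiplying by $(b-a)/2\cdot |f'(b/m)|^m$ gives the asserted bound in both branches.

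The only real obstacle is bookkeeping: keeping the two parameter pairs disentangled (each governs one of the two halves produced by the triangle inequality), verifying that $\mu_i+\tau_i=1$ with $\mu_i,\tau_i>0$ legitimately forces the conjugate pair $1/\mu_i,\,1/\tau_i>1$ needed for Young's inequality, and ensuring the $t^\alpha\to \alpha t$ replacement is correctly oriented ($\eta<1$ reverses the direction). The integral evaluations and the final algebraic simplification to the stated closed form are then routine.
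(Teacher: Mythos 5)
Your proposal follows essentially the same route as the paper's own proof: Lemma \ref{l1} plus the $(\alpha,m)$-log-convexity bound, then the weighted Young inequality $rt\le \mu r^{1/\mu}+\tau t^{1/\tau}$ applied separately with $(\mu_1,\tau_1)$ and $(\mu_2,\tau_2)$ to the two double integrals, the evaluation $\iint|s-t|^{1/\mu_i}\,dt\,ds=\tfrac{2\mu_i^2}{(\mu_i+1)(2\mu_i+1)}$, and inequality (\ref{1}) to pass from $\eta^{t^\alpha/\tau_i}$ to $\eta^{\alpha t/\tau_i}$ in the $\eta<1$ branch. The argument and the resulting closed forms match the paper's, including the correct reading of the notation $\eta(c,c)=\eta^{c}$.
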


\begin{proof}
Since $\left\vert f^{\prime }\right\vert ^{q}$ is an $\left( \alpha
,m\right) $-logarithmically convex on $\left[ 0,\frac{b}{m}\right] $, from
Lemma \ref{l1}, we have%
\begin{eqnarray}
&&\left\vert \frac{f\left( a\right) +f\left( b\right) }{2}-\frac{1}{b-a}%
\int_{a}^{b}f\left( x\right) dx\right\vert  \label{f} \\
&\leq &\frac{\left( b-a\right) }{2}\int_{0}^{1}\int_{0}^{1}\left\vert \left(
f^{\prime }\left( ta+\left( 1-t\right) b\right) \right) -\left( f^{\prime
}\left( sa+\left( 1-s\right) b\right) \right) \right\vert \left\vert
s-t\right\vert dtds  \notag \\
&\leq &\frac{\left( b-a\right) }{2}\left[ \int_{0}^{1}\int_{0}^{1}\left\vert
s-t\right\vert \left\vert f^{\prime }\left( a\right) \right\vert ^{t^{\alpha
}}\left\vert f^{\prime }\left( \frac{b}{m}\right) \right\vert ^{m\left(
1-t^{\alpha }\right) }dtds\right]  \notag \\
&&+\frac{\left( b-a\right) }{2}\left[ \int_{0}^{1}\int_{0}^{1}\left\vert
s-t\right\vert \left\vert f^{\prime }\left( a\right) \right\vert ^{s^{\alpha
}}\left\vert f^{\prime }\left( \frac{b}{m}\right) \right\vert ^{m\left(
1-s^{\alpha }\right) }dtds\right]  \notag \\
&=&\frac{\left( b-a\right) }{2}\left\vert f^{\prime }\left( \frac{b}{m}%
\right) \right\vert ^{m}\left[ \int_{0}^{1}\int_{0}^{1}\left\vert
s-t\right\vert \eta ^{t^{\alpha }}dtds+\int_{0}^{1}\int_{0}^{1}\left\vert
s-t\right\vert \eta ^{s^{\alpha }}dtds\right]  \notag
\end{eqnarray}%
for all $t\in \left[ 0,1\right] .$ Using the well known inequality $rt\leq
\mu r^{\frac{1}{\mu }}+\tau t^{\frac{1}{\tau }},$ on the right side of (\ref%
{f}), we get%
\begin{eqnarray}
&&\int_{0}^{1}\int_{0}^{1}\left\vert s-t\right\vert \eta ^{t^{\alpha
}}dtds+\int_{0}^{1}\int_{0}^{1}\left\vert s-t\right\vert \eta ^{s^{\alpha
}}dtds  \label{w} \\
&\leq &\mu _{1}\int_{0}^{1}\int_{0}^{1}\left\vert s-t\right\vert ^{\frac{1}{%
\mu _{1}}}dtds+\tau _{1}\int_{0}^{1}\int_{0}^{1}\eta ^{\frac{t^{\alpha }}{%
\tau _{1}}}dtds  \notag \\
&&+\mu _{2}\int_{0}^{1}\int_{0}^{1}\left\vert s-t\right\vert ^{\frac{1}{\mu
_{2}}}dtds+\tau _{2}\int_{0}^{1}\int_{0}^{1}\eta ^{\frac{s^{\alpha }}{\tau
_{2}}}dtds  \notag
\end{eqnarray}%
When $\eta =1,$ by (\ref{1}), we get%
\begin{eqnarray}
&&\int_{0}^{1}\int_{0}^{1}\left\vert s-t\right\vert \eta ^{t^{\alpha
}}dtds+\int_{0}^{1}\int_{0}^{1}\left\vert s-t\right\vert \eta ^{s^{\alpha
}}dtds  \label{e} \\
&\leq &\frac{2\mu _{1}^{3}}{\left( 2\mu _{1}+1\right) \left( \mu
_{1}+1\right) }+\frac{2\mu _{2}^{3}}{\left( 2\mu _{2}+1\right) \left( \mu
_{2}+1\right) }+\tau _{1}+\tau _{2}  \notag
\end{eqnarray}%
When $\eta <1,$ by (\ref{1}), we get%
\begin{eqnarray}
&&  \label{r} \\
&&\int_{0}^{1}\int_{0}^{1}\left\vert s-t\right\vert \eta ^{t^{\alpha
}}dtds+\int_{0}^{1}\int_{0}^{1}\left\vert s-t\right\vert \eta ^{s^{\alpha
}}dtds  \notag \\
&\leq &\mu _{1}\int_{0}^{1}\int_{0}^{1}\left\vert s-t\right\vert ^{\frac{1}{%
\mu _{1}}}dtds+\tau _{1}\int_{0}^{1}\int_{0}^{1}\eta ^{\frac{t^{\alpha }}{%
\tau _{1}}}dtds+\mu _{2}\int_{0}^{1}\int_{0}^{1}\left\vert s-t\right\vert ^{%
\frac{1}{\mu _{2}}}dtds+\tau _{2}\int_{0}^{1}\int_{0}^{1}\eta ^{\frac{%
s^{\alpha }}{\tau _{2}}}dtds  \notag \\
&\leq &\mu _{1}\int_{0}^{1}\int_{0}^{1}\left\vert s-t\right\vert ^{\frac{1}{%
\mu _{1}}}dtds+\mu _{2}\int_{0}^{1}\int_{0}^{1}\left\vert s-t\right\vert ^{%
\frac{1}{\mu _{2}}}dtds+\tau _{1}\int_{0}^{1}\int_{0}^{1}\eta ^{\frac{\alpha
t}{\tau _{1}}}dtds+\tau _{2}\int_{0}^{1}\int_{0}^{1}\eta ^{\frac{\alpha s}{%
\tau _{2}}}dtds  \notag \\
&=&\frac{2\mu _{1}^{3}}{\left( 2\mu _{1}+1\right) \left( \mu _{1}+1\right) }+%
\frac{2\mu _{2}^{3}}{\left( 2\mu _{2}+1\right) \left( \mu _{2}+1\right) }%
+\tau _{1}\frac{\eta \left( \frac{\alpha }{\tau _{1}},\frac{\alpha }{\tau
_{1}}\right) -1}{\ln \eta \left( \frac{\alpha }{\tau _{1}},\frac{\alpha }{%
\tau _{1}}\right) }+\tau _{2}\frac{\eta \left( \frac{\alpha }{\tau _{2}},%
\frac{\alpha }{\tau _{2}}\right) -1}{\ln \eta \left( \frac{\alpha }{\tau _{2}%
},\frac{\alpha }{\tau _{2}}\right) }  \notag
\end{eqnarray}

from (\ref{f})-(\ref{r}), which completes the proof.
\end{proof}

\begin{corollary}
Under the assumptions of Theorem \ref{t4}, and $\mu =\mu _{1}=\mu _{2}>0,$ $%
\tau =\tau _{1}=\tau _{2}>0$ with $\mu +\tau =1,$ then we have%
\begin{equation*}
\left\vert \frac{f\left( a\right) +f\left( b\right) }{2}-\frac{1}{b-a}%
\int_{a}^{b}f\left( x\right) dx\right\vert \leq \left\{
\begin{array}{cc}
\frac{\left( b-a\right) }{2}\left\vert f^{\prime }\left( \frac{b}{m}\right)
\right\vert ^{m}\left[ \frac{4\mu ^{3}}{\left( 2\mu +1\right) \left( \mu
+1\right) }+2\tau \right] , & \eta =1 \\
\frac{\left( b-a\right) }{2}\left\vert f^{\prime }\left( \frac{b}{m}\right)
\right\vert ^{m}\left[ \frac{4\mu ^{3}}{\left( 2\mu +1\right) \left( \mu
+1\right) }+2\tau \frac{\eta \left( \frac{\alpha }{\tau },\frac{\alpha }{%
\tau }\right) -1}{\ln \eta \left( \frac{\alpha }{\tau },\frac{\alpha }{\tau }%
\right) }\right] , & \eta <1%
\end{array}%
\right.
\end{equation*}
\end{corollary}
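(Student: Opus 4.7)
The plan is to read off this corollary as a direct specialization of Theorem~\ref{t4}, taking the symmetric choice $\mu_{1}=\mu_{2}=\mu$ and $\tau_{1}=\tau_{2}=\tau$, with all other hypotheses inherited verbatim. No new integral estimate or convexity argument is required.

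First I would check that the hypotheses of Theorem~\ref{t4} are met: the theorem requires $\mu_{1},\mu_{2},\tau_{1},\tau_{2}>0$ together with $\mu_{1}+\tau_{1}=1$ and $\mu_{2}+\tau_{2}=1$, all of which follow at once from the assumption $\mu,\tau>0$ with $\mu+\tau=1$. Hence the conclusion of Theorem~\ref{t4} applies verbatim and may be evaluated at the chosen parameters.

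Next I would substitute these equal values into the two branches of the bound in Theorem~\ref{t4}. In the branch $\eta=1$, each of the two $\mu$-dependent fractions $\dfrac{2\mu_{i}^{3}}{(2\mu_{i}+1)(\mu_{i}+1)}$ reduces to $\dfrac{2\mu^{3}}{(2\mu+1)(\mu+1)}$, so their sum is $\dfrac{4\mu^{3}}{(2\mu+1)(\mu+1)}$, while the additive contribution $\tau_{1}+\tau_{2}$ collapses to $2\tau$. In the branch $\eta<1$ the same bookkeeping gives $\dfrac{4\mu^{3}}{(2\mu+1)(\mu+1)}$ for the $\mu$-part, and the two weighted ratios $\tau_{i}\dfrac{\eta(\alpha/\tau_{i},\alpha/\tau_{i})-1}{\ln \eta(\alpha/\tau_{i},\alpha/\tau_{i})}$ coincide for $i=1,2$, hence sum to $2\tau\,\dfrac{\eta(\alpha/\tau,\alpha/\tau)-1}{\ln \eta(\alpha/\tau,\alpha/\tau)}$. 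Assembling the prefactor $\tfrac{b-a}{2}\lvert f'(b/m)\rvert^{m}$ from Theorem~\ref{t4} yields exactly the stated bound.

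There is essentially no obstacle beyond careful bookkeeping; the only point requiring attention is that under the symmetric choice the two formally distinct symbols $\eta(\alpha/\tau_{1},\alpha/\tau_{1})$ and $\eta(\alpha/\tau_{2},\alpha/\tau_{2})$ merge into the single quantity $\eta(\alpha/\tau,\alpha/\tau)$, so the resulting expression is well defined and matches the corollary verbatim.
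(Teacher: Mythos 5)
Your proposal is correct and coincides with the (implicit) argument of the paper: the corollary is stated without proof precisely because it is the direct specialization $\mu_{1}=\mu_{2}=\mu$, $\tau_{1}=\tau_{2}=\tau$ of Theorem~\ref{t4}, and your substitution into both branches reproduces the stated bound exactly. Nothing further is needed.
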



\begin{thebibliography}{9}
\bibitem{bai} R.-F. Bai, F. Qi and B.-Y. Xi, Hermite-Hadamard type
inequalities for the $m$- and $\left( \alpha ,m\right) $-logarithmically
convex functions. \textit{Filomat, }\textbf{27} (2013), 1-7.

\bibitem{lmmm} M.Z. Sar\i kaya, E. Set, M.E. \"{O}zdemir: New inequalities
of Hermite-Hadamard Type, Volume 12, Issue 4, 2009, Art.11, RGMIA Online:
http://rgmia.org/papers/v12n4/set2.pdf

\bibitem{hdm} J. Hadamard: \'{E}tude sur les propri\'{e}t\'{e}s des
fonctions enti\`{e}res et en particulier d'une fonction consider\'{e}e par
Riemann, J. Math Pures Appl., 58, (1893) 171--215.

\bibitem{mit} D. S. Mitrinovi\'{c}, J. Pe\v{c}ari\'{c} and A. M. Fink:\
Classical and new inequalities in analysis, KluwerAcademic, Dordrecht, 1993.

\bibitem{dr2} S. S. Dragomir and C. E. M. Pearce: Selected topics on
Hermite-Hadamard inequalities and applications, RGMIA monographs, Victoria
University, 2000. [Online:\
http://www.staff.vu.edu.au/RGMIA/monographs/hermite-hadamard.html].

\bibitem{dr3} S.S Dragomir and R.P.Agarwal,Two inequalities for
differantiable mappings and applications to special means of real numbers
and trapezoidal formula, Appl. Math. Lett., 11(5) (1998), 91-95.

\bibitem{pec} J. E. Pe\v{c}ari\'{c}, F. Proschan and Y. L. Tong: Convex
Functions, Partial Orderings, and Statistical Applications, Academic Press
Inc., 1992.

\bibitem{tnc} M. Tun\c{c}, E. Y\"{u}ksel and \.{I}. Karabay\i r, On some
inequalities for functions whose second derivetives absolute values are $%
\alpha $-$,$ $m$-$,$ $\left( \alpha ,m\right) $-logarithmically convex,
submitted.
\end{thebibliography}
\end{document}